\newcommand\imp\rightarrow 
\newcommand\nn{^{\lnot\lnot}}
\newcommand\LJ{\mbox{\textsc{nj}}}
\newcommand\LK{\mbox{\textsc{nk}}}
\newcommand{\seq}{\vdash}
\theoremstyle{plain}
\newtheorem{theorem}{Theorem}
\newtheorem{remark}[theorem]{Remark}
\newtheorem{lemma}[theorem]{Lemma}
\newtheorem{proposition}[theorem]{Proposition}
\author{Richard Moot}
\address{CNRS, LaBRI, Bordeaux}
\email{richard.moot@labri.fr}
\urladdr{http://www.labri.fr/perso/moot} 
\author{Christian Retoré}
\address{Université de Montpellier \& Texte/LIRMM-CNRS}
\email{christian.retore@umontpellier.fr}
\urladdr{http://www.lirmm.fr/~retore}
\title[Classical logic and intuitionistic logic]{Classical logic and intuitionistic logic: equivalent formulations in natural deduction,  
Gödel-Kolmogorov-Glivenko translation}
\begin{document} 

\frontmatter
\begin{abstract}
This report first shows the equivalence between several
  formulations of classical logic in intuitionistic logic (tertium non datur, reductio ad absurdum, Pierce's law). 
Then it establishes the correctness of the G\"odel-Kolmogorov translation, whose restriction to the propositional case is due to  Glivenko.  This translation maps a formula $F$ of first order logic to a formula $F^{\lnot\lnot}$ in such a way that $F$ is provable in classical logic if and only if $F^{\lnot\lnot}$ is provable in intuitionistic logic. All formal proofs are presented in natural deduction.

These questions are well-known proof theoretical  facts, but in textbooks, they are often ignored or left to the reader. 
Because of the combinatorial difficulty of some of the  needed formal proofs, we hope that this report may be useful, in particular to students and colleagues from other areas. \end{abstract}
\begin{altabstract}
Ce rapport commence par établir l'équivalence entre diverses formulation de la logique classique (tiers exclu, raisonnement par l'absurde, loi de Pierce) en logique intuitionniste. Nous  montrons ensuite la correction de la traduction de G\"odel-Kolmogorov, dont la restriction au cas propositionnel est due à Glivenko.  Cette traduction associe à toute formule $F$ de la logique du premier ordre une formule $F^{\lnot\lnot}$ 
telle que $F$ est démontrable en logique classique si et seulement si $F^{\lnot\lnot}$  est démontrable en logique intuitionniste. Toutes les preuves formelles sont présentées en déduction naturelle. 

Ces résultats de théorie de la démonstration sont bien connus, mais dans les ouvrages où ils sont mentionnés, leurs démonstrations sont souvent laissées en exercice au lecteur. Vue la difficulté combinatoire de certains des cas  de ces démonstrations, nous pensons que ce rapport pourra être utile aux étudiants et aux collègues d'autres domaines. 
\end{altabstract}
\subjclass{03B20, 03F03,}
\keywords{logic, proof theory, classical logic, intuitionistic logic, natural deduction} 
\altkeywords{logique, théorie de la démonstration, logique classique, logique intuitionniste, déduction naturelle}

\maketitle

\mainmatter

\section{Foreword and references}

The results in here are not new, 
but it is hard to tell where they are properly written down and published.
They are often left as exercices to the reader. These are exercices 
on the combinatorics of proofs, but some of the cases are not that easy 
for people who are new to proof theory, especially students (though we
encourage everyone to try these proofs themselves). 

Natural deduction is a tree-like framework for formal proofs which is naturally intuitionistic. 
This  tree-like formulation  was introduced in
\cite{Pra65}, where NJ \cite{Gen34,Gen34b} is reformulated  in terms of pseudo-trees. 
More modern references include \cite{GLT88,vanDalen2013}. 

The translation of a formula $F$  into a formula $F^{\lnot\lnot}$
which is intituitonistically provable if and only if $F$ is classically provable is due to 
\cite{Glivenko29} (in French) 
for the propositional case and to 
\cite{Kolmogorov1925} (in Russian, summary in French, in English in \cite{Frege2Goedel1967}) 
and 
\cite{Goedel1933HA,Goedel33nonnon} (in German) 
for the first order case. 

The equivalence of the various formulations of classical logic in intuitionistic logic (tertium no datur, 
reductio ad absurdum, Pierce law) can be found here and there e.g.\ in 
\cite{RasiowaSikorski1963,TD88,Mints2000}

%\newpage 

\section{Natural deduction rules}

Let us recall the natural deduction rules that we use throughout this report. 

A proof is a tree plus additional information: 
\begin{itemize} 
\item the nodes are formul\ae\ 
\item the root is the conclusion of the proof
\item the leaves are the hypothesis which can be:
\begin{itemize} 
\item cancelled or discharged (if so, they are between square brackets)
\item free (nothing particular)
\end{itemize}
\item every branch (unary, binary or ternary)  is labelled by a rule name. 
\item some branches (named $\imp_e,\lor_e,\exists_e$)  include an index which also appears on the hypotheses which are cancelled during the application of the rule. 
\end{itemize} 

If the multiset of free hypotheses of a proof $d$ is $\Gamma$ and the conclusion of $d$ is $C$, then $d$ is a proof of $\Gamma\seq C$ that is a proof of $C$ under the (conjunction of the) assumptions $\Gamma$.

If a rule says that  $H$ is cancellable in $d$ then any number of free occurrences of $H$ can be cancelled (one also says  discharged).
The cancelled hypotheses and the rule receive a fresh new index that encodes this fact (as this information is not recoverable from the proof tree,
this is why natural deductions are more than trees).

$$
\begin{array}{p{11.8ex}|rc|cl}
&\mbox{Introduction rules} & && \mbox{Elimination rules} \\[1ex] \hline &&&& \\  
Implication
&
\infer[\imp_i\alpha]{A\imp B}{\infer*{B}{A \Gamma [A]^{\alpha} \Delta [A]^{\alpha}}}
& & &
\infer[{\imp_e}]{B}
{A & A\imp B}\\[1ex] 
& 
\begin{minipage}{3cm}\it $A$ cancellable. 
\end{minipage} 
&&& \\[1ex] \hline &&&& \\  
Conjunction
&
\infer[\land_i]{A \land B}{A & B} 
&&&
\infer[\land_e^1]{A}{A\land B}
\quad 
\infer[\land_e^2]{B}{A\land B}
\\[1ex] \hline &&&& \\  
Falsum  & 
&&& 
\infer[\scriptstyle{\bot}]
{C}{\bot}
\\[1ex] 
&&&&\begin{minipage}{5cm}\it 
\emph{ex falso quod libet sequitur} 
\end{minipage} 
\\[1ex] \hline &&&& \\ 
\mbox{Disjunction} 
&
\infer[\lor_i^1]{A\lor B}{A} 
\quad 
\infer[\lor_i^2]{A\lor B}{B} 
&&&
\infer[{\lor_e \alpha}]{C}{
\infer*{A\lor B}{\Theta}
& \infer*[d_1]{C}{A \Gamma [A]^{\alpha} \Delta [A]^{\alpha}}
& \infer*[d_2]{C}{B \Theta [B]^{\alpha} \Phi [B]^{\alpha}}}
\\[1ex] 
&&&&\begin{minipage}{5cm}\it 
$A$  cancellable in $d_1$, $B$ in  $d_2$. 
\end{minipage} \\ \hline  
\end{array}
$$ 

$$
\begin{array}{p{2cm}|cc|cc}
&\mbox{Introduction rules} & && \mbox{Elimination rules} \\[1ex] \hline &&&& \\  
Existential\newline  quantifier
&
\infer
[\exists_i]
{\exists x A(x)} 
{A(t)}
&&&
\infer[{\exists_e \alpha}] 
{C}{\infer*{\exists x A(x)}{\Theta}  
&
\infer*{C}{A(u) \Gamma [A(x)]^{\alpha} \Delta [A(x)]^{\alpha}} 
}
\\[1ex] 
&&&&
\begin{minipage}{5cm}\it Afterwards, no free $x$ in $C$ nor in any free hypothesis. 
\end{minipage} 
\\ &&&& \\  \hline  &&&& \\ 
Universal\newline  quantifier
& 
\infer[{\forall_i}]{\forall x A(x)} 
{\infer*{A(x}{\Gamma}}
&&&
\infer[{\forall_e}]
{A(t)}{\forall x A(x)}
\\[1ex] 
& \begin{minipage}{2.5cm}\it No free $x$ in $\Gamma$. \end{minipage} &&&
\\ &&&& \\  \hline 
\end{array} 
$$ 

\normalsize

Natural deduction is ``naturally intuitionistic'': formul\ae\  like $\lnot\lnot A \imp A$ or $(\lnot X)\lor X$ are not provable. 
It is equivalent to other formulations of intuitionistic logic like
the sequent calculus  with many hypothesis and one conclusion.

There are no rules for negation $\lnot X$, which is treated as a shorthand for  $X\imp \bot$: 
$$\lnot X \equiv^{\textit{def}} X \imp \bot$$ For convenience, we
will sometimes write the negation rules as follows.

\[
\begin{array}{ccc}
\infer[\lnot_i\alpha]{\lnot A}{\infer*{\bot}{A\Gamma[A]^{\alpha}
\Delta[A]^{\alpha}}}
&&
\infer[\lnot_e]{\bot}{A & \lnot A}
\end{array}
\]

The reader can easily verify that given $\lnot A \equiv^{\textit{def}}
A \imp \bot$, these are just instances of $\imp_e$ and
$\imp_i$ where the subformula $B$ of the $\imp${} rules is $\bot$.

\section{Three formulations of classical logic} 

To obtain a natural deduction calculus for classical logic, one has to
add a family of proper axioms (i.e.\ axioms other than $A$, that is
$A\seq A$, which unfortunately complicates normalisation and the proof of the subformula property). 

$$
\begin{array}{p{4cm}cp{4cm}}
Tertium Non Datur & \infer{A \lor \lnot A}{tnd} & for all formula $A$\\ 
&\\ 
Reductio Ad Absurdum & \infer{\lnot \lnot A \imp A}{raa}& for all formula $A$\\ 
&\\ 
Pierce law & \infer{((P\imp Q)\imp P)\imp P}{Pierce} & for all formul\ae\  $P$ and $Q$
\end{array}
$$

RAA can also be expressed as rule cancelling several occurrences of $\lnot A$. 
\[
\infer[raa'\alpha]{A}
{\infer*{\bot}{\lnot A \Gamma [\lnot A]^{\alpha} \Delta [\lnot A]^{\alpha}}}
\]

$raa'$ is clearly equivalent to the axiom $raa$ given above: 
using the invertible rule  $\imp_i$ one obtains $\lnot\lnot A$ 
and then obtains $A$ by $raa$ above, as follows. 

\[
\infer[\imp_e]{A}{\infer[\imp_i\alpha]{\lnot\lnot A}
{\infer*{\bot}{\lnot A \Gamma [\lnot A]^{\alpha} \Delta [\lnot
    A]^{\alpha}}} & \infer[raa]{\lnot\lnot\ A \imp A}{}}
\]

Given $raa'$, $raa$ becomes derivable as follows.

\[
\infer[\imp_i(2)]{\lnot\lnot A \imp
  A}{\infer[raa'(1)]{A}{\infer{\bot}{[\lnot\lnot A]^2 [\lnot A]^1}}}
\]

\section{Equivalence of the three formulations of classical logic in intuitionistic logic} 

\subsection{Reductio ad Absurdum entails Tertium Non Datur}

\newcommand\argdeux{
\infer[{\imp_i \scriptstyle{1}}]
{\lnot \lnot a} 
{\infer[{\imp_e}]{\bot}{[\lnot(a\lor \lnot a)]^3 & \infer[{\lor_i^2}]{a\lor \lnot a}{[\lnot a]^1}}}
}

\newcommand\argun{
\infer[{\imp_i \scriptstyle{2}}]{\lnot a} 
{\infer[{\imp_e}]
{\bot} 
{[\lnot(a\lor \lnot a)]^3 
& \infer[{\lor_i^1}]{a\lor \lnot a} {[a]^2} 
}}}

%\begin{center}\mbox{ 
$$
\infer[{\imp_e}] 
{a\lor \lnot a} 
{
\infer[{\imp_i \scriptstyle{3}}]{\lnot\lnot (a\lor \lnot a)}
{\infer[{\imp_e}] {\bot} 
{\argun & \argdeux}}
&\hspace*{-5ex}  \infer{\lnot\lnot (a\lor \lnot a)\imp (a\lor \lnot a)}{raa}
}
$$
%}\end{center} 

\subsection{Tertium Non Datur entails Pierce law}

\newcommand\treeone{
\infer[\imp_i]{((p\imp q)\imp p) \imp p}{[p]^3}
}

\newcommand\treetwo{
\infer[{\imp_i \scriptstyle{1}}]
{p\imp q}
{\infer[\scriptstyle{\bot}]{q}{\infer[{\imp_e}]{\bot}{{[p]^1} & {[\lnot p]^3}}}}
}

\newcommand\treetwoplus{
\infer[{\imp_e \scriptstyle{2}}]{p}{ \treetwo & {[(p\imp q)\imp p]^2}  }
}

\newcommand\treetwofinal{
\infer[\imp_i \scriptstyle{2}]{((p\imp q)\imp p)\imp p}{\treetwoplus}
}

%\begin{center}\mbox{
$$
\infer[\lor_e \scriptstyle{3}]{((p\imp q)\imp p)\imp p}
{\infer{p\lor \lnot p}{tnd} & \treeone & \treetwofinal} 
$$
%}\end{center} 

\subsection{Pierce law entails Reductio ad Absurdum}

\newcommand\arbreun{
\infer[{\imp_i \scriptstyle{1}}]{\lnot P \imp P}{\infer[{\scriptstyle{\bot}}]{P}{\infer[\imp_e]{\bot}{[\lnot \lnot P]^2 & [\lnot P]^1}}}
}

%\begin{center} \mbox{
$$
\infer[{\imp_i \scriptstyle{2}}]{\lnot\lnot P \imp P}
{\infer[{\imp_e}]{P}{\infer{(\lnot P \imp P)\imp P}{Pierce\ avec\ Q=\bot} & \arbreun}}
$$
%}\end{center} 

\section{The G\"odel-Kolmogorov translation}

The not-not translation $F\nn$ of a formula $F$ is inductively defined as follows: 

\begin{center} 
\begin{tabular}{r@{\,=\,}l}
$\bot\nn$ & $\bot$ \\
$a\nn$ & $\lnot\lnot a$ \\[2mm]

$(A \land B)\nn$ & $A\nn \land B\nn$ \\ 
$(A \imp B)\nn$ & $A\nn \imp B\nn$ \\
$(\forall x. A)\nn$ & $\forall x. A\nn$ \\[2mm]

$(A \lor B)\nn$ & $\lnot\lnot(A\nn \lor B\nn)$ \\
$(\exists x. A)\nn$ & $\lnot\lnot \exists x. A\nn$ \\
\end{tabular}
\end{center} 

\LJ\  stands for plain natural deduction, which is intuitionistic. 

\LK\ stands for classical natural deduction, that is \LJ\ enriched by
one of the families of axioms given above (or all of them, since they are equivalent):  tertium non datur, reductio ad absurdum or Pierce law. 

Since $\bot\nn=\bot$ and $\lnot A=(A\imp \bot)$, the definition of the not not translation of an implicative formula yields the following remark: 

\begin{remark} 
$(\lnot A)\nn=\lnot(A\nn)$. 
\end{remark}

\begin{proposition}\label{nnnAyieldsnA} 
$\lnot\lnot\lnot A\seq^{\LJ} \lnot A$
\end{proposition}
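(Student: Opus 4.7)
The plan is to prove this by a direct construction in \LJ, working from the goal $\lnot A$ backwards. Unfolding negation, we have to produce a proof of $A\imp\bot$ from the free hypothesis $\lnot\lnot\lnot A$, that is $\lnot\lnot A \imp \bot$. The natural move is to apply $\imp_i$ (equivalently $\lnot_i$) discharging an assumption $[A]^{\alpha}$, reducing the problem to: from $\lnot\lnot\lnot A$ and $A$, derive $\bot$.

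Given $A$ and $\lnot\lnot\lnot A$, the strategy is to first build a proof of $\lnot\lnot A$ from $A$, and then feed this $\lnot\lnot A$ into $\lnot\lnot\lnot A$ by $\imp_e$ (or $\lnot_e$) to get $\bot$. To build $\lnot\lnot A$ from $A$, I would apply $\imp_i$ discharging a fresh hypothesis $[\lnot A]^{\beta}$ and derive $\bot$ by $\lnot_e$ from $A$ and $[\lnot A]^{\beta}$. This is the standard intuitionistic fact that $A \vdash \lnot\lnot A$.

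Putting the two blocks together yields a proof whose overall shape is: at the top, $A$ (from the assumption discharged by $\alpha$) and $[\lnot A]^{\beta}$ combine by $\lnot_e$ into $\bot$; $\imp_i$ with index $\beta$ discharges $[\lnot A]^{\beta}$ and concludes $\lnot\lnot A$; an $\imp_e$ against the free hypothesis $\lnot\lnot\lnot A$ yields $\bot$; finally $\imp_i$ with index $\alpha$ discharges $[A]^{\alpha}$ and concludes $\lnot A = A \imp \bot$.

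There is no real obstacle here, only bookkeeping: one must be careful that the two discharge indices $\alpha$ and $\beta$ are distinct, that $\lnot\lnot\lnot A$ remains free throughout (it is never discharged), and that the inner use of $\imp_i$ really produces $\lnot\lnot A = (A\imp\bot)\imp\bot$ of the right shape to match the antecedent of $\lnot\lnot\lnot A$. The only subtlety worth flagging is the interplay between the abbreviation $\lnot X \equiv X\imp\bot$ and the fact, noted earlier in the excerpt, that the $\lnot_i/\lnot_e$ rules are just the corresponding instances of $\imp_i/\imp_e$; this makes the proof writable equivalently with three nested $\imp$-rules or with $\lnot_i,\lnot_e$ shortcuts.
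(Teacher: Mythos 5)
Your construction is exactly the proof given in the paper: discharge $[A]$ at the outermost $\lnot_i$, build $\lnot\lnot A$ from $A$ by discharging an inner $[\lnot A]$, and cut against the free hypothesis $\lnot\lnot\lnot A$ by $\lnot_e$. The bookkeeping you flag (distinct indices, $\lnot\lnot\lnot A$ left free) is handled in the paper by the indices $1$ and $2$, so the two proofs coincide.
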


\begin{proof} Here is the natural deduction proof of it: 
%\begin{center}
$$ 
\infer[\lnot_i(2)]{\lnot A}{
   \infer[\lnot_e]{\bot}{
      \lnot\lnot\lnot A 
    & \infer[\lnot_i(1)]{\lnot\lnot A}{
          \infer[\lnot_e]{\bot}{
             [A]^2 
           & [\lnot A]^1
          }
       }
   }
}
$$
%\end{center} 
\end{proof}

\begin{lemma}
 $\lnot\lnot F\nn\seq^{\LJ} F\nn$
\label{lemme} 
\end{lemma}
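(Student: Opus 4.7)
The plan is to proceed by structural induction on $F$, showing that every formula of the form $F\nn$ is stable under double negation in \LJ. The translation is designed precisely so that $F\nn$ falls into one of a handful of ``stable'' syntactic shapes: either it is $\bot$, or a negated formula $\lnot B$, or a conjunction, implication, or universal quantification of translated formulas. Each of these shapes is known to be closed under intuitionistic double-negation elimination, provided the immediate subformulas already are, and that is exactly what the induction hypothesis will supply.

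Concretely, the base cases I would handle first. For $F=\bot$ we have $F\nn=\bot$ and $\lnot\lnot\bot\seq^{\LJ}\bot$ is obtained by applying $\imp_e$ to the identity proof of $\bot\imp\bot$. For $F=a$ atomic, $F\nn=\lnot\lnot a$, and we need $\lnot\lnot\lnot\lnot a\seq^{\LJ}\lnot\lnot a$, which is exactly Proposition~\ref{nnnAyieldsnA} applied with $A:=\lnot a$. The two ``wrapped'' cases $F=A\lor B$ and $F=\exists x.A$ are equally immediate, since $F\nn$ has the form $\lnot\lnot G = \lnot(\lnot G)$, and Proposition~\ref{nnnAyieldsnA} applied with $A:=\lnot G$ gives $\lnot\lnot\lnot\lnot G\seq^{\LJ}\lnot\lnot G$.

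The inductive cases are the three ``transparent'' connectives. For $F=A\land B$, assuming $\lnot\lnot(A\nn\land B\nn)$, one builds $\lnot\lnot A\nn$ (assume $\lnot A\nn$, use $\land_e^1$ inside an assumed $A\nn\land B\nn$ to derive $\bot$) and symmetrically $\lnot\lnot B\nn$; the two induction hypotheses then yield $A\nn$ and $B\nn$, which one combines with $\land_i$. For $F=A\imp B$, from $\lnot\lnot(A\nn\imp B\nn)$ and an assumption $A\nn$, I would derive $\lnot\lnot B\nn$ (assume $\lnot B\nn$, then from an auxiliary hypothesis $A\nn\imp B\nn$ obtain $B\nn$ by $\imp_e$, contradict $\lnot B\nn$, discharge to get $\lnot(A\nn\imp B\nn)$, and close against $\lnot\lnot(A\nn\imp B\nn)$) and conclude $B\nn$ via the induction hypothesis on $B$; an $\imp_i$ discharging $A\nn$ finishes the case. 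For $F=\forall x.A$, I apply $\forall_i$, reducing the goal to $A\nn$ with a fresh $x$; then by the induction hypothesis on $A$ it suffices to produce $\lnot\lnot A\nn$, which one obtains from $\lnot\lnot\forall x.A\nn$ by instantiating any assumed $\forall x.A\nn$ via $\forall_e$ to contradict an assumed $\lnot A\nn$.

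I expect no real obstacle, but the trickiest case to write out cleanly is the implication case, because it is the only one where the induction hypothesis is used under an additional assumption and one must carefully track the scope of each discharged hypothesis. The universal case is essentially its first-order shadow, and its only subtlety is the eigenvariable condition, which is met because the hypothesis $\lnot\lnot\forall x.A\nn$ does not contain $x$ free after a suitable renaming.
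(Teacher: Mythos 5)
Your proposal is correct and follows essentially the same route as the paper: the same case split into the four cases discharged directly by Proposition~\ref{nnnAyieldsnA} ($\bot$, atoms, $\lor$, $\exists$) and the three inductive cases ($\land$, $\imp$, $\forall$), with the same derivations of $\lnot\lnot A\nn$ and $\lnot\lnot B\nn$ from the doubly negated compound formula before invoking the induction hypothesis, and the same handling of the eigenvariable condition in the $\forall$ case.
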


\begin{proof}
We proceed by induction on $F$. 

\begin{enumerate}
\item If $F=\bot$ one has to show that  $\lnot\lnot \bot \seq \bot$: 

%\begin{center}\mbox{
$$\infer[\lnot_e]{\bot}{
   \lnot\lnot \bot
 & \infer[\lnot_i(1)]{\lnot \bot}{    [\bot]^1   }
}
$$
%}\end{center}

\item If $F=a$ we have to show $\lnot\lnot\lnot\lnot at \seq
  \lnot\lnot at$ which is a consequence of Proposition~\ref{nnnAyieldsnA} with $A=\lnot at$. 

\item  If $F=X\lor Y$ we have to show that $\lnot\lnot\lnot\lnot(X\nn
  \lor Y\nn) \seq \lnot\lnot(X\nn \lor Y\nn)$, which  is a consequence
  of Proposition~\ref{nnnAyieldsnA} with $A=\lnot(X\nn \lor Y\nn)$.

\item If $F=\exists x\ P$ one has to show that
  $\lnot\lnot\lnot\lnot(\exists x\ P\nn) \seq \lnot\lnot(\exists x\ P)$ which  is a consequence of Proposition~\ref{nnnAyieldsnA} with $A=\lnot(\exists x\ P\nn)$. 

\item If $F=A \imp B$, one has to show that $ \lnot\lnot (A\nn \imp B\nn)\seq (A\nn \imp B\nn)$.  The induction hypothesis (IH) makes sure that 
 $\lnot\lnot B\nn \seq B\nn$: 

%\begin{center}\mbox{
$$\infer[\imp_i(3)]{A\nn \imp B\nn}{
   \infer*[IH]{B\nn}{
      \infer[\lnot_i(2)]{\lnot\lnot B\nn}{
           \infer[\lnot_e]{\bot}{
              \lnot\lnot (A\nn \imp B\nn)
            & \infer[\lnot_i(1)]{\lnot (A\nn \imp B\nn)}{
                 \infer[\lnot_e]{\bot}{
                    \infer[\imp_e]{B\nn}{
                       [A\nn]^3
                     & [A\nn \imp B\nn]^1
                    }
                  & [\lnot B\nn]^2
                 }
              }
          }
      }
   }
}
$$
%}\end{center}

\item If $F=A \land B$, one has to show that $ \lnot\lnot (A\nn \land B\nn)\seq (A\nn \land B\nn)$ and because of the induction hypothesis (IH) we can assume that 
$\lnot\lnot A\nn \seq A\nn$ and $\lnot\lnot B\nn \seq B\nn$.

%\begin{center}%\hspace*{-15ex} 
%\infer[\land_i]{A\nn \land B\nn}{
%\mbox{
$$   \infer*[IH]{A\nn}{
      \infer[\lnot_i(2)]{\lnot\lnot A\nn}{
          \infer[\lnot_e]{\bot}{
             \infer[\lnot_i(1)]{\lnot (A\nn \land B\nn)}{
                \infer[\lnot_e]{\bot}{
                   \infer[\land_e]{A\nn}{[A\nn \land B\nn]^1}
                 & [\lnot A\nn]^2
                }
             }
           & \lnot\lnot (A\nn \land B\nn)
          }
      }
   }
   $$
   %%}  \end{center}
 
 \vspace{-5ex} 
 
% \begin{center} \mbox{
$$\infer*[IH]{B\nn}{
      \infer[\lnot_i(4)]{\lnot\lnot B\nn}{
          \infer[\lnot_e]{\bot}{
             \infer[\lnot_i(3)]{\lnot (A\nn \land B\nn)}{
                \infer[\lnot_e]{\bot}{
                   \infer[\land_e]{B\nn}{[A\nn \land B\nn]^3}
                 & [\lnot B\nn]^4
                }
             }
           & \lnot\lnot (A\nn \land B\nn)
          }
      }
   }
   $$
  % } \end{center} 

\noindent From those two proofs, both with the single undischarged hypothesis\linebreak $\lnot\lnot (A\nn \land B\nn)$, one easily gets $A\nn \land B\nn$ by the rule $\land_i$. \medskip 

\item If $F=\forall x\ A$
one has to show that $\lnot\lnot (\forall x\ A\nn)\seq (\forall x\ A\nn)$ and the induction hypothesis (IH) guarantees  that
$\lnot\lnot A\nn \seq A\nn$. 

%\begin{center}\mbox{
$$
\infer[\forall_i]{\forall x. A\nn}{
   \infer*[IH]{A\nn}{
      \infer[\lnot_i(2)]{\lnot\lnot A\nn}{
         \infer[\lnot_e]{\bot}{
            \lnot\lnot \forall x.A\nn
          & \infer[\lnot_i(1)]{\lnot \forall x.A\nn}{
               \infer[\lnot_e]{\bot}{
                  \infer[\forall_e]{A\nn}{[\forall x. A\nn]^1}
                & [\lnot A\nn]^2
               }
            }
          }  
      }
   }
}
$$
%}\end{center}

\noindent Notice that the hypothesis  $[\lnot A\nn]^2$ is cancelled before the $\forall_i$ rule is applied. 
\end{enumerate} 
\end{proof}

\begin{theorem}\label{goedel_kolmogorov} 
$\Gamma\nn\seq^{\LJ} F\nn$ if and only if $\Gamma\seq^{\LK} F$. 
\end{theorem}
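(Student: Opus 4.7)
The plan is to prove both directions separately, with the heavy lifting in ($\Leftarrow$).

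For the direction $\Gamma \seq^{\LK} F \Rightarrow \Gamma\nn \seq^{\LJ} F\nn$, I would proceed by induction on the given NK derivation, constructing an NJ derivation of $\Gamma\nn \seq^{\LJ} F\nn$ in which each hypothesis occurrence of a formula $A$ is replaced by $A\nn$ with its discharge marker preserved. Since the translation commutes with $\land$, $\imp$, $\forall$ and fixes $\bot$, the rules $\land_i, \land_e, \imp_i, \imp_e, \forall_i, \forall_e$ and $\bot_e$ translate rule-for-rule. The interesting cases are the ones where the translation introduces an outer double negation: $\lor_i$, $\lor_e$, $\exists_i$, $\exists_e$. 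For $\lor_i^1$, from an NJ derivation of $A\nn$ one obtains $\lnot\lnot(A\nn \lor B\nn)$ by a short argument (assume $\lnot(A\nn \lor B\nn)$, apply $\lor_i^1$ to $A\nn$, derive $\bot$, discharge). For $\lor_e$, given $\lnot\lnot(A\nn \lor B\nn)$ together with NJ derivations of $C\nn$ from $[A\nn]$ and from $[B\nn]$, I would assume $\lnot C\nn$ and a temporary $[A\nn \lor B\nn]$, chain the two subderivations through $\lor_e$ to obtain $C\nn$, contradict $\lnot C\nn$ to reach $\bot$, discharge to form $\lnot(A\nn \lor B\nn)$, contradict $\lnot\lnot(A\nn \lor B\nn)$, discharge $\lnot C\nn$ to obtain $\lnot\lnot C\nn$, and finally invoke Lemma~\ref{lemme} to conclude $C\nn$. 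The $\exists_i$ and $\exists_e$ cases are entirely analogous.

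It remains to handle the proper axioms of NK. Since the three families are interderivable in NJ by the previous section, I need only provide an NJ proof of the translation of one of them; reductio ad absurdum is most convenient, because $(\lnot\lnot A \imp A)\nn$ is literally $\lnot\lnot A\nn \imp A\nn$, which is just Lemma~\ref{lemme} packaged as an implication via $\imp_i$.

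For the reverse direction, I would first show by induction on $F$ that both $F \seq^{\LK} F\nn$ and $F\nn \seq^{\LK} F$ hold in classical natural deduction: the first is immediate from double-negation introduction together with the induction hypothesis, while the second uses $raa$ to strip the outer double negations added by the translation on $\lor$ and $\exists$, and the induction hypothesis elsewhere. Given this, from $\Gamma\nn \seq^{\LJ} F\nn$ we read off $\Gamma\nn \seq^{\LK} F\nn$ (every NJ derivation is an NK derivation), and then prepending $\Gamma \seq^{\LK} \Gamma\nn$ and appending $F\nn \seq^{\LK} F$ yields $\Gamma \seq^{\LK} F$. The main obstacle is the $\lor_e$ case (and its twin $\exists_e$) in the forward direction: knitting the three subderivations together under a double-negated disjunction requires careful bookkeeping of discharge indices, and the argument cannot be closed without Lemma~\ref{lemme}, which is precisely what licenses extracting $C\nn$ from $\lnot\lnot C\nn$ for the arbitrary formula $C$ appearing in the conclusion of $\lor_e$.
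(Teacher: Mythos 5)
Your proposal is correct and takes essentially the same approach as the paper: the hard direction by induction on the \LK{} derivation, with the translation commuting with $\land,\imp,\forall,\bot$, an added outer double negation for $\lor_i,\exists_i$, and Lemma~\ref{lemme} extracting $C\nn$ from $\lnot\lnot C\nn$ in the $\lor_e,\exists_e$ cases and for reductio ad absurdum; the easy direction by embedding \LJ{} into \LK{} and using $A\seq^{\LK} A\nn$ and $A\nn\seq^{\LK} A$. The only minor divergence is that you dispatch the remaining classical axioms by reducing them to reductio ad absurdum via the interderivability results of the previous section, whereas the paper exhibits a direct \LJ{} proof of $\lnot\lnot(A\nn\lor\lnot A\nn)$ for a tertium non datur leaf---both are fine.
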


\subsection{If $\Gamma\nn\seq^{\LJ} F\nn$ then $\Gamma\seq^{\LK} F$}

If $\Gamma\nn\seq^{\LJ} F\nn$ is provable in \LJ, then it is also provable in 
 \LK: indeed the rules of  \LJ{} are rules of  \LK. 

Since  in  \LK{} 
it possible to add and delete 
double negations, for every formula  $A$, both 
$A \vdash^{\LK} A\nn$ and  $A\nn  \vdash^{\LK} A$ hold, 
as an easy induction on the formula shows. 
Thus, a proof in \LK{} can be constructed. 

\begin{center} 
\begin{tabular}{c@{$\quad\leadsto\quad$}c@{$\quad\leadsto\quad$}c}
\infer*[\LJ]{F\nn}{A_1\nn & \ldots & A_n\nn} &
\infer*[\LK]{F\nn}{A_1\nn & \ldots & A_n\nn} &
\infer*[\LK]{F}{\infer*[\LK]{F\nn}{\infer*[\LK]{A_1\nn}{A_1}\!\!\!\!\!\!\! & \ldots & \infer*[\LK]{A_n\nn}{A_n}}} \\
\end{tabular}
\end{center} 

\subsection{If $\Gamma\seq^{\LK} F$ then $ \Gamma\nn\seq^{\LJ} F\nn$}

We proceed by induction on the height of the proof in  \LK. 
Observe that the obtained 
$\LJ$ proof has the same occurences of free variables.

\subsubsection{The height of the proof is  $0$ and the proof is an axiom  $A\seq^{\LK} A$}

\begin{center}
\begin{tabular}{c@{$\quad\leadsto\quad$}c}
$A$ & $A\nn$ \\
\end{tabular}
\end{center}

\subsubsection{The height of the proof is $0$  and it is an application tertium non datur  $\seq^{\LK} A\lor \lnot A$}

Remember that $(\lnot A)\nn=\lnot(A\nn)$ . 

%\begin{center}\mbox{
$$
\infer[\lnot_i(2)]{\lnot\lnot (A\nn \lor \lnot A\nn)}{
   \infer[\lnot_e]{\bot}{
      \infer[\lor_i]{A\nn \lor \lnot A\nn}{
         \infer[\lnot_i(1)]{\lnot A\nn}{
            \infer[\lnot_e]{\bot}{
               \infer[\lor_i]{A\nn \lor \lnot A\nn}{
                  [A\nn]^1
               }
            & [\lnot (A\nn \lor \lnot A\nn)]^2
            }
         }
      }
    & [\lnot (A\nn \lor \lnot A\nn)]^2
   }
}
$$
%}\end{center}

\subsubsection{The hight of the proof is  $0$ and it comes from  reductio ad absurdum $\seq^{\LK} (\lnot \lnot A)\imp A$}

We have to show that $\seq (\lnot\lnot A)\nn \imp A\nn$ but since $\lnot A=(A\imp \bot)$ 
we know that  $(\lnot\lnot A)\nn=\lnot\lnot A\nn$. We therefore have
to show that  $\seq \lnot\lnot (A\nn) \imp A\nn$, but this true by Lemma~\ref{lemme}.

\subsubsection{The proof ends with $\bot_e$}

We apply the induction hypothesis  (IH) to the proof without this last rule, 
using the fact that  $\bot\nn=\bot$. 

\begin{center}
\begin{tabular}{c@{$\quad\leadsto\quad$}c}
\mbox{
 \infer[\bot_e]{A}{\infer*{\bot}{\Gamma}}
}
&
\mbox{
 \infer[\bot_e]{A\nn}{\infer*[IH]{\bot}{\Gamma\nn}}
}
\end{tabular}
\end{center}

\subsubsection{The proof ends with the rule  $\imp_e$}

The induction hypothesis (IH) can be applied to the two proofs 
obtained by suppressing this last rule. 

\begin{center}
\begin{tabular}{c@{$\quad\leadsto\quad$}c}
\mbox{
\infer[\imp_e]{B}{\infer*{A}{\Gamma} & \infer*{A\imp B}{\Delta}}
} &
\mbox{
\infer[\imp_e]{B\nn}{\infer*[IH]{A\nn}{\Gamma\nn} & \infer*[IH]{A\nn\imp B\nn}{\Delta\nn}}
} 
\end{tabular}
\end{center}

\subsubsection{The last rule is  $\imp_i$}

The induction hypothesis (IH) can be applied to the proof 
obtained by suppressing this last rule. 

\begin{center}
\begin{tabular}{c@{$\quad\leadsto\quad$}c}
\mbox{
\infer[\imp_i(k)]{A\imp B}{\infer*{B}{\Gamma & [A]^k}}
} &
\mbox{
\infer[\imp_i(k)]{A\nn\imp B\nn}{\infer*[IH]{B\nn}{\Gamma\nn & [A\nn]^k}}
}
\end{tabular}
\end{center}

\subsubsection{The last rule is  $\land_e$}

The induction hypothesis (IH) can be applied to the proof 
obtained by suppressing this last rule. 

\begin{center}
\begin{tabular}{c@{$\quad\leadsto\quad$}c}
\mbox{
\infer[\land_e]{A}{\infer*{A\land B}{\Gamma}}
} &
\mbox{
\infer[\land_e]{A\nn}{\infer*[IH]{A\nn \land B\nn}{\Gamma\nn}}
} 
\end{tabular}
\end{center}

\subsubsection{The last rule is $\land_i$}

The induction hypothesis (IH) can be applied to the two proofs 
obtained by suppressing this last rule.

\begin{center}
\begin{tabular}{c@{$\quad\leadsto\quad$}c}
\mbox{
\infer[\land_i]{A\land B}{\infer*{A}{\Gamma} & \infer*{B}{\Delta}}
}
&
\mbox{
\infer[\land_i]{A\nn\land B\nn}{\infer*[IH]{A\nn}{\Gamma\nn} & \infer*[IH]{B\nn}{\Delta\nn}}
}
\end{tabular}
\end{center}

\subsubsection{The last rule is  $\lor_e$}

The induction hypothesis (IH) can be applied to the three proofs 
obtained by suppressing this last rule. 
We use Lemma~\ref{lemme}.

\begin{center}
\begin{tabular}{c@{$\quad\leadsto\quad$}c}
\mbox{
\infer[\lor_e]{C}{\infer*{A\lor B}{\Gamma} & \infer*{C}{\Delta & [A]^k} & \infer*{C}{\Theta & [B]^k}}
}
& \\[6mm]
\multicolumn{2}{c}{
\mbox{
\infer*[\textit{Lemma}~\ref{lemme}]{C\nn}{
\infer[\lnot_i(m)]{\lnot\lnot C\nn}{
\infer[\lnot_e]{\bot}{
\infer[\lnot_i(l)]{\lnot(A\nn \lor B\nn)}{
\infer[\lnot_e]{\bot}{
    \infer[\lor_e]{C\nn}{[A\nn\lor B\nn]^l & \infer*[IH]{C\nn}{\Delta\nn & [A\nn]^k} & \infer*[IH]{C\nn}{\Theta\nn & [B\nn]^k}}
 &  [\lnot C\nn]^m
}} &
\infer*[IH]{\lnot\lnot(A\nn\lor B\nn)}{\Gamma\nn}
}}}
}}
\end{tabular}
\end{center}

\subsubsection{The last rule is $\lor_i$}

The induction hypothesis (IH) can be applied to the proof 
obtained by suppressing this last rule.

\begin{center}
\begin{tabular}{c@{$\quad\leadsto\quad$}c}
\mbox{
\infer[\lor_i]{A\lor B}{\infer*{A}{\Gamma}}
}
&
\mbox{
\infer[\lnot_i(l)]{\lnot\lnot (A\nn\lor B\nn)}{
  \infer[\lnot_e]{\bot}{
   \infer[\lor_i]{A\nn\lor B\nn}{\infer*[IH]{A\nn}{\Gamma\nn}}
 & [\lnot (A\nn\lor B\nn)]^l
}}}
\end{tabular}
\end{center}

\subsubsection{The last rule is $\forall_e$}

The induction hypothesis (IH) can be applied to the proof 
obtained by suppressing this last rule. 

\begin{center}
\begin{tabular}{c@{$\quad\leadsto\quad$}c}
\mbox{
\infer[\forall_e]{A[x:=t]}{\infer*{\forall x. A}{\Gamma}}
} &
\mbox{
\infer[\forall_e]{A[x:=t]\nn}{\infer*[IH]{\forall x. A\nn}{\Gamma\nn}}
} 
\end{tabular}
\end{center}

\subsubsection{The last rule is  $\forall_i$}

The induction hypothesis (IH) can be applied to the proof 
obtained by suppressing this last rule. 

\begin{center}
\begin{tabular}{c@{$\quad\leadsto\quad$}c}
\mbox{
\infer[\forall_i]{\forall x.A}{\infer*{A}{\Gamma}}
}
&
\mbox{
\infer[\forall_i]{\forall x.A\nn}{\infer*[IH]{A\nn}{\Gamma\nn}}
}
\end{tabular}
\end{center}

\subsubsection{The last rule is  $\exists_e$}

The induction hypothesis (IH) can be applied to the two proofs 
obtained by suppressing this last rule. 
We use Lemma~\ref{lemme}.

\begin{center}
\begin{tabular}{c@{$\quad\leadsto\quad$}c}
\mbox{
\infer[\exists_e(k)]{C}{\infer*{\exists x. A}{\Gamma} & \infer*{C}{\Delta &[A]^k}}
} &
\mbox{
\infer*[\textit{Lemma}~\ref{lemme}]{C\nn}{
\infer[\lnot_i(m)]{\lnot\lnot C\nn}{
   \infer[\lnot_e]{\bot}{
     \infer[\lnot_i(l)]{\lnot \exists x. A\nn}{ 
        \infer[\lnot_e]{\bot}{
           \infer[\exists_e(k)]{C\nn}{
               [\exists x. A\nn]^l 
             & \infer*[IH]{C\nn}{
                  \Delta\nn 
                & [A\nn]^k
               }
           }
      & [\lnot C]^m
        }
     }
   & \infer*[IH]{\lnot\lnot \exists x. A\nn}{\Gamma\nn}
  }
}
}}
\end{tabular}
\end{center}

\subsubsection{The last rule is $\exists_i$}

The induction hypothesis (IH) can be applied to the proof 
obtained by suppressing this last rule.

\begin{center}
\begin{tabular}{c@{$\quad\leadsto\quad$}c}
\mbox{
\infer[\exists_i]{\exists x.A}{\infer*{A[x:=t]}{\Gamma}}
} &
\mbox{
\infer[\lnot_i(l)]{\lnot\lnot\exists x.A\nn}{
   \infer[\lnot_e]{\bot}{
       \infer[\exists_i]{\exists
         x.A\nn}{\infer*[IH]{A[x:=t]\nn}{\Gamma\nn}}
     & [\lnot \exists x. A\nn]
   }}} \\
\end{tabular}
\end{center}

\backmatter 

\bibliographystyle{smfplain}
\bibliography{bigbiblio} 
\end{document}